\newtheorem{theorem}{Theorem}
\newtheorem{definition}{Definition}
\newtheorem{lemma}{Lemma}
\newtheorem{proposition}{Proposition}
\tikzset{
        block/.style = {draw, rectangle, 
                        minimum height=1cm, 
                        minimum width=2cm},
        input/.style = {coordinate,node distance=1cm},
        output/.style = {coordinate,node distance=4cm},
        arrow/.style={draw, -latex,node distance=2cm},
        pinstyle/.style = {pin edge={latex-, black,node distance=2cm}},
        sum/.style = {draw, circle, node distance=1cm}
}
\title{Passivity   Analysis of Replicator Dynamics and its Variations}
\author{M. A. Mabrok 
\thanks{M. A. Mabrok (m.a.mabrok@gmail.com) With the School of Engineering, Australian College of Kuwait.}}
\begin{document}

\maketitle

\begin{keywords}
Learning in games, evolutionary games, passivity, population games.
\end{keywords}

\begin{abstract}
In this paper, we focus on studying the passivity properties of different versions of replicator dynamics (RD). RD is  an important class of evolutionary dynamics in  evolutionary game theory. Evolutionary dynamics describe how the population composition changes in response to the fitness levels, resulting in a closed-loop feedback system. RD is a deterministic monotone non-linear  dynamic that allows incorporation of  the distribution of  population types through  a fitness function. Here, in this paper, we use a  tools for control theory, in particular,  the  passivity theory, to study the stability  of the RD  when it is in action with evolutionary games. The passivity theory allows us to identify class of evolutionary games in which stability with RD is guaranteed.   We show that several variations of the first order RD   satisfy the standard loseless passivity property. In contrary, the second order RD do not satisfy the standard  passivity property, however, it satisfies a similar dissipativity property known as negative imaginary property. The  negative imaginary property of the second order RD allows us to identify the class of games that converge to a stable  equilibrium with the second order RD. 

\end{abstract}

\section{Introduction}

Population games  \cite{Sandholm2010, Hofbauer1998} is a class of games that  model interactions between  a large number of agents (named players in game theory contest), in which each player or agent's  payoff  depends on  his own  strategy and the distribution of strategies of other agents. There has been extensive research in a variety of settings, ranging from societal \cite{Young1998} to biological \cite{Smith1982} to engineered \cite{Marden2015}.

In population games, and  learning in games \cite{Fudenberg1998,Hart2005,Young2005}, one of the main points  is to  understand the  behavior of the agent  strategies on the long run.  In particular, understanding the convergence and divergence of population strategies to a particular solution concept such as Nash equilibrium. The  convergence or  divergence of such strategies will depend on both the game under consideration as well as the evolutionary dynamics \cite{Hofbauer2003,Hofbauer2009}.
Some specific games can exhibit inherent obstacles to convergence for a cluster of  evolutionary dynamics \cite{Hart2003}.

Replicator dynamics (RD) is one  of the most studied game dynamics in the literature of evolutionary game theory. RD was introduced as a non-linear first-order differential equations in \cite{taylor1978evolutionary} to model a single species   and given the name of RD in  \cite{Schuster1983}. The RD has been used extensively in modeling game dynamics including    biological systems  in order to predict the evolutionary  behavior without a detailed analysis of such biological factors as genetic or population size effects  \cite{cressman2014replicator}.

Passivity theory is a very well established  sub-class of control  theory, which implies  useful properties such as stability, and  the importance of passivity as tool in nonlinear control of interconnected systems---unlike Lyapunov stability criteria---relays  on the fact that any  set of  passive sub-systems in parallel or feedback configuration forms a passive  system. Passivity is an input-output property of a class of nonlinear physical systems which can only consume energy. The notion of passivity originally comes from electric circuits composed of resistors, capacitors, and inductors. The same definition
applies to analogous mechanical and hydraulic systems. This idea can be
extended to study electric circuits with nonlinear passive components and magnetic
couplings. For a detailed discussion on passive systems see [2, 3, 4, 5] and
the references therein, which show that passive systems theory received a great
deal of attention in the last few decades. When a passive system is connected to
a strictly passive system in a negative feedback loop, energy is strictly dissipated
as signals propagate around the loop, and hence the feedback interconnection is stable.   In other words,  by ensuring that every subsystem is passive, a complex structure of subsystems  can be built  to satisfy certain properties.

In \cite{Fox2013}, the relationship between passivity theory and class of population games and dynamics  was   established. It was shown that what is called contractive games can be considered as passive games where it satisfies similar   properties to passive systems. Similarly,  many  evolutionary dynamics also satisfy  the  passivity property. This implies that the feedback interconnections of passive evolutionary dynamics with passive game exhibit stable behavior. The above connection between  passivity theory and  population games and dynamics enables the opportunity to analyze in a similar way similar  classes  of  games and evolutionary dynamics, for instance, higher order games and higher order dynamics.

In the standard  population games, the fitness function of the  population strategies is a static function of the population composition. However, in higher order population games,  dependence can be dynamic, e.g., as a model of path dependencies \cite{Fox2013}. Similarly, in the standard form evolutionary dynamics, the number of states is equal to the number of population strategies. However, higher order dynamics is able to introduce different behaviors \cite{Hart2003,Sato2002,Shamma2005,Arslan2006}. Also, in  \cite{Laraki2013}, the higher order version of the replicator dynamics can  eliminate  weakly dominated strategies, which is not the case in the  canonical  replicator dynamics.

In this paper, we focus on studying the passivity property of several variations of   the well know replicator dynamics, including the higher order replicator dynamics. We show that the standard replicator dynamics satisfy the lossles passivity  property. Also, the local version of replicator dynamics satisfy the lossles  property. However, the higher order of the replicator dynamics do not satisfy the passivity property. Instead, it satisfy the negative imaginary property, which can be considered as a counterpart of the passivity theory. The Negative imaginary theory also have a similar stability conditions to those in passivity theory.

The remainder of this paper is organized as follows: Section II presents preliminary material on population games, passivity and negative imaginary systems. Section III presents the concept of higher order games and dynamics. Section IV  presents  the passivity property for different variations of the replicator dynamics. Section V inderduces the notion of passiviation for replicator dynamics. Finally, Section VI contains concluding remarks.

\section{Preliminaries and notations }\label{sec:PAN}

This section presents the required  preliminaries and notations  from input-output operators, game  theory, replicator dynamics  and passivity theory. 
\subsection{Input-output operators}
Dynamical system can be seen  as an operator defined on function spaces. Suppose that $ \mathbb{L}_2$ 
denote the Hilbert space of square integrable functions, which  maps  $ \mathbb{R}_+$ to $ \mathbb{R}^n$ with inner product  $\langle \cdot, \cdot \rangle$ and $\lVert \cdot \lVert$. Let $ \mathbb{L}_{2e}$ denote the  space of functions that are square integrable over finite intervals. 

\begin{align}\label{eq:1}
\mathbb{L}_{2e}=\left\lbrace f: \mathbb{R}_+ \rightarrow \mathbb{R}_{n}:\int_0^ \infty f(t)^Tf(t)dt \le \infty \; \forall \; T \in \mathbb{R} \right\rbrace.
\end{align}
For $ \mathbb{U}$,$ \mathbb{Y}$ $\subset$  $ \mathbb{L}_{2e}$ subsets of functions, an input-output operator is a mapping $S: \mathbb{U}\rightarrow \mathbb{Y}$.

\subsection{Passivity  theory}

Passivity theory is a useful tool to assess the stability of a feedback interconnection.
In brief, if both components of a feedback interconnection have the passivity property, then the closed loop interconnection is stable \cite{Lozano2013,Willems1972,Fox2013}. A game theoretic context  to passivity in games including the definition  of   \emph{passive games} was  given  in \cite{Fox2013}.

Define $\Sigma$ to be a   nonlinear dynamical system with the following state space description: 
\begin{align}\label{eq:nonlinearsys}
\dot{x} & =f(x,u), \;\;\;\;\;\; x(0)=x_0,\\
y & =g(x,u), \notag
\end{align}
where, $u(t) \in \mathbf{U} \subset \mathbb{R}^m$ is the system's input vector, $x_0 \in \mathbf{M} \subset \mathbb{R}^n$,  $y \in \mathbb{R}^m$ is the system's output vector and $x(t) \in \mathbb{R}^n$ is the system's state vector. Also, assume that for some classes of functions,
$\mathbb{U}$ and $\mathbb{Y}$, and for any initial condition in $\mathbf{M}$, there exists a solution for all $u \in \mathbb{U}$ resulting in $y \in \mathbb{Y}$ and
$x(t) \in \mathbf{M}$ all $t \geq 0$. Next, we present  two definitions  for  passive system from both state space and input-output perspectives.

\begin{definition}
The nonlinear  system $\Sigma$ with state space \eqref{eq:nonlinearsys} is said to be passive if there exists a  \textit{storage function } $ L:\mathbf{M}\rightarrow \mathbb{R_+}$ such that for all  $x_0 \in \mathbf{M}$ and $t \geq 0$,
\begin{align}\label{eq:storage1}
L(x(t))\leq L(x_0)+\int_0^t\!  u(\tau)^T y(\tau) d \tau.
\end{align}
\end{definition}

The input-output definition of passivity property is given as follows: 
\begin{definition}
The input-output operator $S: \mathbb{U}\rightarrow \mathbb{Y}$ is said to be passive    if there exist  constant $\alpha$  such that:
\begin{align}\label{eq:inerpassive}
\langle Su, u\rangle_T \geq\alpha, \forall \; u \in \mathbb{U}, T\in \mathbb{R}_+,
\end{align}
and input strictly passive if there exist $\beta > 0$  and $\alpha$ such that
\begin{align}\label{eq:inerpassive2}
\langle Su, u\rangle_T \geq\alpha, +\beta\langle u, u\rangle_T \forall\; u \in \mathbb{U}, T\in \mathbb{R}_+,
\end{align}
where, $\langle f, g \rangle_T=\int_0^Tf(t)^Tg(t)dt$.
\end{definition}

In the case of  equality in the inequalities  \eqref{eq:storage1} and \eqref{eq:inerpassive}, the system is said to be \textit{lossless}. 

The stability of the  feedback interconnection between passive systems is  a fundamental result in passivity theory (e.g., \cite{VanderSchaft2012}).  That is, the negative feedback interconnection between a passive system  $\Sigma_1$ and strictly passive  $\Sigma_2$,  as shown in  Fig. \ref{fig1},  is stable  feedback interconnection. Also, the closed loop system from $r$ to $y_1$ is passive. 

\begin{figure}
\begin{center}
\begin{tikzpicture}[auto, node distance=2.5cm,>=latex']
    \node [input, name=input] {};
    \node [sum, right of=input] (sum) {};
   \node [block, right of=sum] (system) {$\Sigma_1$};
    \node [output, right of=system] (output) {};
    \node [block, below of=system] (controller) {$\Sigma_2$};
    \draw [draw,->] (input) -- node {$r$} (sum);
    \draw [->] (sum) -- node {$u_1$} (system);
    \draw [->] (system) -- node [name=y] {$y_1$}(output);
    \draw [->] (y) |- node [above,pos=0.79]{$u_2$} (controller) ;
    \draw [->] (controller) -| node[pos=0.99] {$-$} 
        node [near end] {$y_2$} (sum);
\end{tikzpicture}
\end{center}
        \caption{Negative  feedback interconnection. }\label{fig1}
\end{figure}
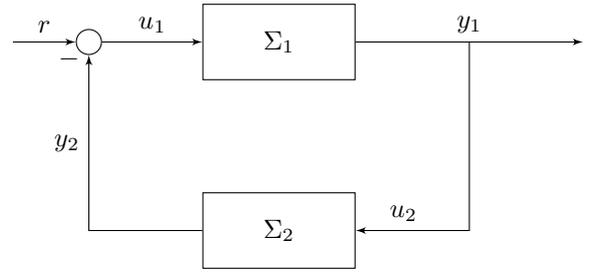

\subsection{Negative imaginary systems theory }

Negative imaginary (NI) systems theory was introduced by  Lanzon  and  Petersen  in \cite{lanzon2008,petersen2010} for linear time invariant systems to control systems with flexible structure dynamics. Several  generalization for the the  negative imaginary  theory can be found in \cite{xiong21010jor, Mabrok2014c, Mabrok2015}.  NI systems  theory complements the limitations of passivity theory in linear systems,  which is only applicable for systems with relative degree zero or one. Recently, nonlinear version of negative imaginary system theory was developed in \cite{Ghallab2018}. 
In this section, we recall  the     definition of NI and
SNI systems as given in \cite{Mabrok2013}. We also define some 
notations that will be used  in the  paper.

Consider the following LTI system,
\begin{align}
\label{eq:xdotn}
&\dot{x}(t) = A x(t)+B u(t), \\
\label{eq:yn} &y(t) = C x(t)+D u(t),
\end{align}%
where $A \in \mathbb{R}^{n \times n},B \in \mathbb{R}^{n \times m},C
\in \mathbb{R}^{m \times n},$ $D \in \mathbb{R}^{m \times m},$ and
with the  square transfer function matrix $G(s)=C(sI-A)^{-1} B+D$.
The transfer function matrix $G(s)$ is said to be strictly proper if
$G(\infty)=D=0$. We will use the notation $
\begin{bmatrix}
\begin{array}{c|c}
A & B \\ \hline C & D
\end{array}
\end{bmatrix}$ to denote the state space realization
\eqref{eq:xdotn}, \eqref{eq:yn}.

The NI system is defined as follows; 
\begin{definition}\cite{Mabrok2012,Mabrok2013}\label{Def:NI}
A square transfer function matrix $G(s)$ is NI if  the following
conditions are satisfied:
\begin{enumerate}
\item $G(s)$ has no pole in $Re[s]>0$.
\item For all $\omega >0$ such that $s=j\omega$ is not a pole of $G(s
)$,
\begin{equation}\label{eq:NI:def}
    j\left( G(j\omega )-G(j\omega )^{\ast }\right) \geq 0.
\end{equation}
\item If $s=j\omega _{0}$ with $\omega _{0}>0$ is a pole of $G(s)$, then it is a simple pole and the residue matrix $K=\underset{%
s\longrightarrow j\omega _{0}}{\lim }(s-j\omega _{0})jG(s)$ is
Hermitian and  positive semidefinite.
 \item If $s=0$ is a pole of $G(s)$, then
$\underset{s\longrightarrow 0}{\lim }s^{k}G(s)=0$ for all $k\geq3$
and $\underset{s\longrightarrow 0}{\lim }s^{2}G(s)$ is Hermitian and
positive semidefinite.
\end{enumerate}
\end{definition}

\begin{definition}\cite{xiong21010jor}
A square transfer function matrix $G(s)$ is SNI if  the
following conditions are satisfied:
\begin{enumerate}
\item ${G}(s)$ has no pole in $Re[s]\geq0$.
\item For all $\omega >0$, $j\left( {G}(j\omega )-{G}(j\omega )^{\ast }\right) > 0$.
\end{enumerate}
\end{definition}

Here, we present an NI lemma.
\begin{lemma}\label{NI-lemma}
 Let $
\begin{bmatrix}
\begin{array}{c|c}
A & B \\ \hline C & D
\end{array}
\end{bmatrix}$ defining the system (\ref{eq:xdotn})-(\ref{eq:yn}) be a minimal realization of the transfer function matrix  $%
G(s)$. Then, $G(s)$ is NI if and only if $D=D^T$ and there exist
matrices $P=P^{T}\geq 0$,
 $W\in \mathbb{R}^{m \times m}$, and $L\in \mathbb{R}^{m \times n}$
 such that the following linear matrix inequality (LMI) is
satisfied:
\begin{small}
\begin{align}\label{LMI:PR}
\begin{bmatrix}
PA+A^{T}P & PB-A^{T}C^{T} \\
B^{T}P-CA & -(CB+B^{T}C^{T})%
\end{bmatrix}%
 =
\begin{bmatrix}
-L^{T}L & -L^{T}W \\
-W^{T}L & -W^{T}W%
\end{bmatrix}%
\leq 0.
\end{align}
\end{small}
\end{lemma}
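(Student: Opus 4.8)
The plan is to prove Lemma~\ref{NI-lemma} as the negative-imaginary counterpart of the classical Positive Real (KYP) Lemma, by reducing the NI condition on $G(s)$ to a positive-real condition on an auxiliary transfer function and then invoking the positive real lemma on the transformed realization. First I would introduce the auxiliary transfer function $\Theta(s) = s\big(G(s)-D\big)$. Using the identity $sC(sI-A)^{-1}B = CB + CA(sI-A)^{-1}B$, the function $\Theta$ admits the state-space realization $(A,\,B,\,CA,\,CB)$, so that $\Theta(s) = CB + CA(sI-A)^{-1}B$. The first substantive step is the frequency-domain equivalence: assuming the feedthrough symmetry $D=D^{T}$ of the statement, a direct computation gives, for every $\omega$ that is not a pole, $\Theta(j\omega)+\Theta(j\omega)^{\ast} = \omega\, j\big(G(j\omega)-G(j\omega)^{\ast}\big)$. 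Since for a real-rational $G$ the Hermitian part $j\big(G(j\omega)-G(j\omega)^{\ast}\big)$ reverses sign under $\omega\mapsto-\omega$, the NI inequality (condition~2 of Definition~\ref{Def:NI}) for $\omega>0$ is equivalent to $\Theta(j\omega)+\Theta(j\omega)^{\ast}\ge 0$ for all $\omega\neq 0$; that is, $G$ is NI exactly when $\Theta$ is positive real.

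Next I would apply the positive real lemma to $\Theta$ with the realization $(\tilde A,\tilde B,\tilde C,\tilde D) = (A,B,CA,CB)$. The PR lemma asserts the existence of $P=P^{T}$ solving $\begin{bmatrix} \tilde A^{T}P+P\tilde A & P\tilde B-\tilde C^{T} \\ \tilde B^{T}P-\tilde C & -(\tilde D+\tilde D^{T})\end{bmatrix}\le 0$; substituting $\tilde C = CA$ and $\tilde D = CB$ turns this into precisely the matrix on the left of \eqref{LMI:PR}. The factored right-hand side $-\begin{bmatrix}L^{T}\\ W^{T}\end{bmatrix}\begin{bmatrix}L & W\end{bmatrix}$ is then nothing more than a spectral factorization of the (negative semidefinite) left-hand side: every negative semidefinite symmetric matrix can be written in this form, and conversely any matrix so written is $\le 0$. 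Thus the equality in \eqref{LMI:PR} is merely the dissipation-equality restatement of the inequality together with the feedthrough requirement $D=D^{T}$.

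For the two directions separately: necessity ($G$ NI $\Rightarrow$ LMI solvable) follows from the forward PR lemma applied to $\Theta$, which produces $P=P^{T}\ge 0$ and hence $L,W$. Sufficiency (LMI $\Rightarrow$ $G$ NI) I would prove by the congruence trick: pre- and post-multiplying \eqref{LMI:PR} by $\begin{bmatrix}B^{T}(-j\omega I-A^{T})^{-1} & I\end{bmatrix}$ and its conjugate transpose $\begin{bmatrix}(j\omega I-A)^{-1}B\\ I\end{bmatrix}$ collapses, after the resolvent identities, to $\Theta(j\omega)+\Theta(j\omega)^{\ast}=\omega\, j\big(G(j\omega)-G(j\omega)^{\ast}\big)\ge 0$, recovering the NI frequency condition; the block $PA+A^{T}P=-L^{T}L\le 0$ together with minimality and $P\ge0$ yields the closed-left-half-plane pole location required by condition~1.

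The step I expect to be the main obstacle is the treatment of imaginary-axis poles and, in particular, a pole of $G$ at $s=0$. When $A$ is nonsingular the realization $(A,B,CA,CB)$ inherits controllability and observability from the minimal realization of $G$, so the PR lemma applies verbatim; but when $A$ is singular, left-multiplication by $A$ in $\tilde C=CA$ can destroy observability, so the transformed system need not be minimal and the standard PR lemma cannot be quoted directly. This is exactly the regime covered by conditions~3 and~4 of Definition~\ref{Def:NI}, and the care required there is also what forces the weakened sign requirement $P=P^{T}\ge 0$ (rather than $P>0$) in the statement. I would handle this by isolating the pole at the origin, applying the argument to the nonsingular-$A$ part, and verifying that the residue and positive-semidefiniteness conditions on the imaginary axis are equivalent to solvability of the LMI with $P$ only positive semidefinite.
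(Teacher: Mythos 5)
The paper itself gives no proof of Lemma~\ref{NI-lemma}: it is recalled verbatim from the cited literature (\cite{xiong21010jor}, \cite{Mabrok2013}), so there is no in-paper argument to compare yours against. That said, your route---reduce the NI property of $G$ to positive realness of $\Theta(s)=s\bigl(G(s)-D\bigr)$ with realization $(A,B,CA,CB)$, then invoke the positive real lemma---is precisely the standard derivation used in those references. Your frequency-domain identity $\Theta(j\omega)+\Theta(j\omega)^{\ast}=\omega\, j\bigl(G(j\omega)-G(j\omega)^{\ast}\bigr)$ (valid once $D=D^{T}$ is imposed), the observation that the substitution $\tilde C=CA$, $\tilde D=CB$ reproduces \eqref{LMI:PR} verbatim, and the congruence argument for sufficiency on non-pole frequencies are all correct.

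The gap is the part you defer in your last paragraph, and it is not a technicality: it is the entire content that distinguishes this version of the lemma (only minimality of $(A,B,C,D)$ assumed, $P\ge 0$ rather than $P>0$, poles permitted on the imaginary axis including at the origin) from the classical Lanzon--Petersen form. Concretely, three things remain unproved. First, you match only condition~2 of Definition~\ref{Def:NI} to positive realness of $\Theta$; conditions~3 and~4 must be shown to correspond to the residue conditions of the generalized positive real lemma for $\Theta$. (They do: a simple pole of $G$ at $j\omega_{0}$ with $K=\lim_{s\to j\omega_{0}}(s-j\omega_{0})jG(s)$ gives $\Theta$ the residue $\omega_{0}K$, and a double pole of $G$ at the origin gives $\Theta$ a simple pole there with residue $\lim_{s\to 0}s^{2}G(s)$; but the proof has to carry out this bookkeeping.) Second, when $A$ is singular the realization $(A,B,CA,CB)$ is in general not observable, so the positive real lemma cannot be quoted; the decomposition you sketch in one sentence is exactly where the work lies. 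Third, in the sufficiency direction the inequality $PA+A^{T}P=-L^{T}L\le 0$ with $P$ merely positive semidefinite does not by itself exclude eigenvalues of $A$ in the open right half plane (consider $P=0$); ruling them out requires combining the full LMI \eqref{LMI:PR} with minimality of $(A,B,C,D)$, not the $(1,1)$ block alone. As written, your argument establishes the lemma only in the regime where $G$ has no imaginary-axis poles and $P$ can be taken positive definite.
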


The positive feedback interconnection between an NI
system  with transfer function matrix  $G(s)$ and an SNI system with
transfer function matrix $\bar{G}(s)$ as shown in Fig.
\ref{conn:NI:SNI}. Also, suppose that the transfer function matrix
$G(s)$ has a minimal state space realization  $
\begin{bmatrix}
\begin{array}{c|c}
A & B \\ \hline C & D
\end{array}
\end{bmatrix},$ and $\bar{G}(s)$ has a minimal state space realization  $
\begin{bmatrix}
\begin{array}{c|c}
\bar{A} & \bar{B} \\ \hline \bar{C} & \bar{D}
\end{array}
\end{bmatrix}.$ Furthermore, it is assumed that the matrix $I-D\bar{D}$   is nonsingular. Then the closed system has a system matrix given by

\begin{figure}
  \centering\includegraphics[width=7cm]{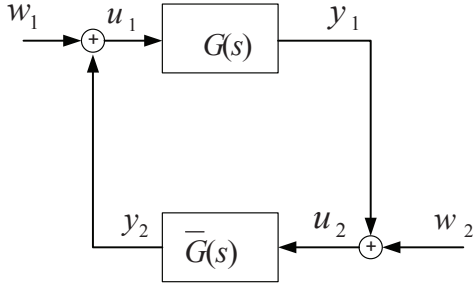}\\
  \caption{A negative-imaginary feedback control system. If the plant
transfer function matrix $G(s)$ is NI and the controller transfer
function matrix $\bar{G}(s)$ is SNI, then the positive-feedback
interconnection is internally stable if and only if the DC gain
condition, $\lambda_{max}(G(0)\bar{G}(0))<1,$ is satisfied.
}\label{conn:NI:SNI}
\end{figure}

\begin{small}
\begin{align}
\breve{A} =\begin{bmatrix} A+B\bar{D}(I-D\bar{D})^{-1}C &
B\bar{C}+B\bar{D}(I-D\bar{D})^{-1}D\bar{C} \\
\bar{B}(I-D\bar{D})^{-1}C & \bar{A}+\bar{B}(I-D\bar{D})^{-1}D\bar{C}%
\end{bmatrix}.%
\label{mat:A:CL}
\end{align}
\end{small}%
Moreover, the positive feedback interconnection between $G(s)$ and
$\bar{G}(s)$ as shown in Fig. \ref{conn:NI:SNI} and denoted
$[G(s),\bar{G}(s)]$ is said to be  internally stable if  the
closed-loop system  matrix $\breve{A}$ in \eqref{mat:A:CL} is
Hurwitz; e.g., see \cite{Glover1996}.

A nonlinear definition  of NI system is given as follows: 
 Consider the following general nonlinear system
\begin{align}
\label{x}\dot{x}&= f(x,u)\\
\label{y} y&= h(x)
\end{align}
where $f:\mathbb{R}^n\times \mathbb{R}\rightarrow\mathbb{R}^n$ is Lipschitz continuous function and  $h:\mathbb{R}^n\rightarrow\mathbb{R}$ is a class $C^1$ function.
\vspace{.25cm}

\begin{definition}\label{def:nonNI}
The nonlinear system \eqref{x}, \eqref{y} is Negative Imaginary if there exists a positive storage function $V:\mathbb{R}^n\rightarrow\mathbb{R}$ of a class $C^1$ such that
\begin{equation}\label{lyapd}
    \dot{V}(x(t))\leq \dot{y}(t)u(t).
\end{equation}
\end{definition}

\begin{definition}\label{def:nonNI2}
The nonlinear system \eqref{x}, \eqref{y} is Negative Imaginary if there exists a positive storage function $V:\mathbb{R}^n\rightarrow\mathbb{R}$ of a class $C^1$ such that
\begin{equation}\label{lyapd2}
V(x(t)) \leq V(x(0)) +\int_0^ty(s)^T\int_0^su(\tau)d\tau ds,
\end{equation}
for all $t > 0$. 

Or,
\begin{equation}\label{lyapd3}
\dot{V}(x(t)) \leq y(t)^T\int_0^t u(\tau)d\tau.
\end{equation}
\end{definition}

The main difference between the  NI definition given in \eqref{def:nonNI} and  passivity definition is the use of $\dot{y}(t)$ instead of ${y}(t)$ in calculating the supplied energy rate.

\subsection{Population games}

A game, in general, consist of three basic elements.  Number of \textit{players} $N$:   are  the  decision makers in  the game context. 
\textit{Strategies} $S$:  are  the set   of actions that a particular player will play  given a set of conditions and circumstances that will emerge in  the game being played. 
\textit{Payoff} $P$: is the reward which a player receives from playing  at a particular strategy. In general, game is viewed as a mapping from \textit{strategies} $x\in X$, where $X=\{x: \;\; \Sigma_{i \in S}x_i=1\}$, to \textit{Payoff} $F(x)$. This viewpoint can be extended in dynamic setting to a mapping of strategy trajectories $x(\cdot)$ to payoff trajectories $F(\cdot)$. In other words,  games can be described as  a dynamical systems with states, inputs and outputs.

 A single population game has a set of  strategies $S =\{1, 2, ..., m\}$ and  a set of strategy distributions 
$X=\{x: \;\; \Sigma_{i \in S}x_i=1\}$. Since strategies lie in the simplex, admissible changes in strategy are restricted to the tangent space $TX=\{z\in  \mathbb{R}^m : \sum_{i\in s} z_i=0\}$.  

Now, define $F(x) : S\longrightarrow \mathbb{R}^m$ to be  the payoff function that  associate each strategy distribution in $S$ with a payoff vector so that $F_i: S\longrightarrow \mathbb{R}^m$ is the payoff to strategy $i\in S$. Also, define $DF(x)$ to be the Jacobian matrix of $F(x)$.  A state $x^*\in X$ is a \textit{Nash equilibrium} if  each strategy in the support of $x$ receives the maximum payoff available to the population.

\subsection{Evolutionary dynamics}

In general game theory setup, the prediction of the game outcome and its equilibrium analysis depends on  what game players know about it. In such setup, a assumption such as full rationality of players is always made. 

A slightly different approach to study the  game behavior, in particular  for games that have repetitive interactions among a large number of players, is disequilibrium analysis. Here, players meant to adjust and revise the chose of their strategies based on the their current strategies as well as the state of the collective  population when a feedback is given. In this setup, the \textit{revision protocol}, is used to organize 
the procedure of how and when agents change their strategies.

\begin{figure}
\begin{center}
\begin{tikzpicture}[auto, node distance=3cm,>=latex']
    \node [input, name=input] {};
    \node [sum, right of=input] (sum) {};
   \node [block, right of=sum] (system) {Dynamics};
    \node [output, right of=system] (output) {};
    \node [block, below of=system] (controller) {  Game};
    \draw [->] (sum) -- node {$P$} (system);
    \draw [->] (system) -- node [name=y] {$X$}(output);
    \draw [->] (y) |- node [above,pos=0.79]{$X$} (controller) ;
    \draw [->] (controller) -| node[pos=0.99] {$+$} 
        node [near end] {$P$} (sum);
\end{tikzpicture}
\end{center}
        \caption{Feedback interconnection of a dynamic and a game, where $P$ is the payoffs and $X$ is the strategies. }\label{fig:dy:game}
\end{figure}
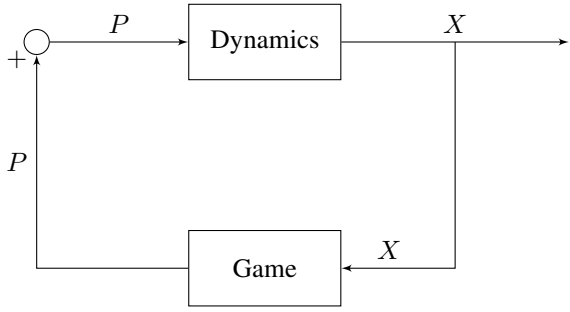

Revision protocol can be considered as a mapping from payoffs to strategy distributions under a particular  conditional switch rate $\rho_{ij}$. A population game $F$ with a revision protocol $\rho_{ij}$ creates the \textit{mean dynamic}, which is given as follows:

\begin{align}\label{eq:mean}
\dot{x_i}=\sum_{j \in S} x_j\rho_{ij}-x_i\sum_{j \in S} \rho_{ij}. 
\end{align}

Throughout the mean dynamic given in \eqref{eq:mean} and different revision protocols $\rho_{ij}$, one can create different evolutionary dynamics that maps payoffs to new strategy distribution. Then, this new  strategy distribution is mapped by the game to generate new vector of payoffs  as shown in Fig. \ref{fig:dy:game}. This analogy  is similar to the feedback interconnections in control systems setups. 

For instance, the replicator dynamic is generated  using the  imitation of success  revision protocol, which is given as follows:
\begin{align}\label{eq:RP1}
\rho_{ij}= x_j(p_j -K),
\end{align}
where $K$ is a constant less than or equal to any $p_j$. By substituting \eqref{eq:RP1} into the mean dynamic \eqref{eq:mean}, we get the replicator dynamics equations as follows: 

\begin{align}\label{eq:RD}
\dot{x_i}=x_i(p_i-\sum_j x_j p_j),
\end{align}
where $p_i$ is the payoff for using strategy $i$.

Another representation of the replicator dynamics Equation  \eqref{eq:RD} is  cascaded connection between an integrator and Gibbs distribution as shown in Fig. \ref{fig:RD-nonp}. 
\begin{figure}[H]

\centering{\includegraphics[height=3.8 cm]{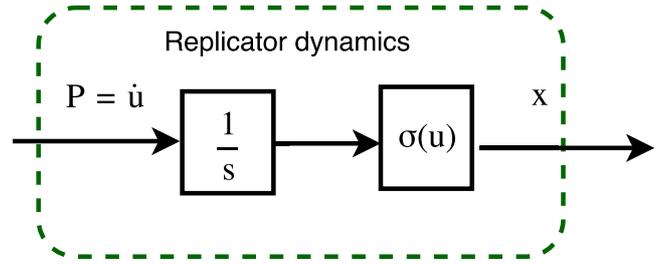}}
\caption{Replicator dynamics  \eqref{eq:RD}  as a cascaded connection between  integrator and Gibbs distribution.  }\label{fig:RD-nonp}.
\end{figure} 

\section{ Higher-order dynamics and games}
As we indicated in the previous section, the standard   game is a static mapping from strategies $X$  to a set of real valued payoffs $P$,
\begin{equation*}
P=F(x),
\end{equation*}
and the evolutionary dynamic is restricted first order mapping from payoffs $P$ to strategies $X$,
\begin{equation*}
\dot{x}=V(x,p).
\end{equation*}
The dynamical view of this feedback loop can be extended to a mapping of strategy trajectories to payoff trajectories. This viewpoint allows the introduction of  generalized  forms of dynamics and games, such as higher-order  dynamics and games, to generate these trajectories. 

Higher-order  dynamics can be introduced---independent of the game---through  auxiliary states to the the first order dynamics \cite{Shamma2005,Arslan2006}, which can be interpreted as path dependency. Also, similar  higher-dynamics can be obtained by the direct derivative of the first order dynamics \cite{Laraki2013}.  It has been shown in \cite{Hart2003,Sato2002,Shamma2005,Arslan2006,Laraki2013} that   modification of the standard dynamics   can exhibit qualitatively different behaviors. One form of  generalized higher-order dynamics obtained by inducing  an auxiliary state $z$ int the dynamics. In this case, a higher-order dynamics takes the  following form:
\begin{equation*}
\begin{aligned}
\dot{z}&=f(z,p)  \;\;\;\;\; z(0)=z_0,\\
x&=g(z,p),
\end{aligned}
\end{equation*}
where, for all $x(0)$ in the simplex and for all bounded $p(\cdot)$,  there exists $x^*$ and $p^*$ such that if $p(t)\rightarrow p^*$, then, $x(t)\rightarrow x^*$.   

Furthermore,  higher-order  dynamics can be introduced by  \textit{processing} the  payoff. Here, the processing might  include  an integral  action, which means taking the payoff history into the consideration while generating the new strategies. Also, processing the payoff can be introduced thought a derivative action or anticipating the payoff trend, i.e.,   taking  into  consideration the future of the payoff or the trend that the payoff is following  \cite{Fox2013,Arslan2006}.

 Similarly,  static games  can be generalized by introducing  internal dynamics into the game. This concept is illustrated in \cite{Fox2013} through dynamically modified payoff function coupled with the static game. Therefore, we view  the  higher-order games as a generalization of   standard games  by introducing internal dynamics into the game, i.e., \textit{dynamical system} mapping from strategies $X$  to payoffs $P$. 
 
  Auxiliary state $z$ can be used to induce dynamics into static  games to generate  higher-order games as follows; 
  
   \begin{equation*}
   \begin{aligned}
   \dot{z}&=f(z,x) \;\;\;\;\; z(0)=z_0,\\
   p&=g(z,x),
   \end{aligned}
   \end{equation*}
   such that for all $x(0)$ in the simplex, and for all bounded $p(\cdot)$,  there exists $x^*$ and $p^*$ such that if $x(t)\rightarrow x^*$, there exists a unique $F(x^*)=\lim\limits_{t\rightarrow \infty}P(t)=p^*$.  This implies that each higher-order game converge and associated a stander static game at the steady state.  
   

Higher-order games can also be viewed as a \emph{processing or filtering}  the strategies  before computing its payoff. 
For instance, inducing a low pass filter that fillers out the  high frequency strategies, will transform  the game to a  dynamical system that  maps  strategies $X$  to payoffs $P$.  

Path dependency  is another motivation of  higher-order models in games. Here, path dependency means how a particular set of  choices  is affected   by the choices that  has been  made in the past, even if the  past circumstances may no longer be relevant \cite{Arrow1963}. 


\section{Passivity Analysis for Different Variations of the Replicator Dynamic}\label{sec:HOG}
This section presents  the main contribution of the paper.

The definition of stability in this context implies that there
is an evolutionary stable state i.e., rest point, where the
distance between the population distribution and this rest point
decreases along the population trajectories, i.e., the population
converge to this stable state. Therefore, unstable feedback loop
between learning rule and game means that the feedback will
not converge to a rest point.

\subsection{Standard Replicator dynamics}\label{sec:RD}
Replicator dynamics are an important class of evolutionary dynamics that originated from models of natural evolution \cite{Schuster1983}. They provide a way to represent selection among a population of diverse types.

 We first show that first order replicator dynamics are indeed passive dynamics. In particular, replicator dynamics belongs to special class of passive systems known as lossless systems.

 The standard replicator dynamics is given in \eqref{eq:RD}:

Let $x^*$  be  a Nash equilibrium for the replicator dynamics. Define  $e_{x_i}=x_i-x^*_i$ to be the deviation from the equilibrium.   The following theorem shows that the first order replicator dynamics from the payoff $p_i$ to the error $e_{x_i}$ belongs to  passive (lossless) systems. 
\begin{theorem}
 The replicator dynamics given in \eqref{eq:RD} are passive (lossless) mapping from payoffs $p$ the error $e$.
\end{theorem}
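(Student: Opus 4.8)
The plan is to produce an explicit storage function and verify the lossless form of the passivity inequality \eqref{eq:storage1}, taking the input to be the payoff vector $p$ and the output to be the error with components $e_{x_i}=x_i-x_i^*$. The natural candidate is the relative entropy (Kullback--Leibler divergence) from the equilibrium $x^*$ to the current state $x$,
\[
L(x)=\sum_{i\in S} x_i^{*}\ln\frac{x_i^{*}}{x_i}.
\]
First I would check that $L$ is an admissible storage function in the sense required by \eqref{eq:storage1}: by Gibbs' inequality (equivalently, Jensen applied to the strictly convex function $-\ln$) one has $L(x)\geq 0$ with $L(x^{*})=0$, so $L:\mathbf{M}\rightarrow\mathbb{R}_{+}$. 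This step tacitly requires $x^{*}$ to be interior (or the analysis to be restricted to its support) so that each term $x_i^{*}\ln(x_i^{*}/x_i)$ is well defined, and I would flag that assumption explicitly.

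The core of the argument is a direct differentiation of $L$ along trajectories of \eqref{eq:RD}. Substituting $\dot{x}_i=x_i(p_i-\sum_{j} x_j p_j)$ cancels the $x_i$ in the denominator, yielding
\[
\dot{L}(x)=-\sum_{i\in S} x_i^{*}\,\frac{\dot{x}_i}{x_i}
=-\sum_{i\in S} x_i^{*}\Bigl(p_i-\sum_{j\in S} x_j p_j\Bigr).
\]
Using $\sum_{i} x_i^{*}=1$ (since $x^{*}\in X$ lies in the simplex), the population-averaged payoff $\sum_{j} x_j p_j$ survives in full, and regrouping gives
\[
\dot{L}(x)=\sum_{i\in S} x_i p_i-\sum_{i\in S} x_i^{*} p_i
=\sum_{i\in S} p_i\,(x_i-x_i^{*})=p^{T}e.
\]
This is exactly equality in the differential form of \eqref{eq:storage1} with $u=p$ and $y=e$, i.e. $\dot{L}=u^{T}y$, which is the lossless condition noted after \eqref{eq:inerpassive}; integrating from $0$ to $t$ recovers \eqref{eq:storage1} as an equality and closes the claim.

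I expect the main difficulty to be bookkeeping around the simplex constraints rather than any deep estimate. One must invoke both $\sum_{i} x_i^{*}=1$ and the fact that $e=x-x^{*}$ lies in the tangent space $TX$, so that $\sum_{i} e_{x_i}=0$; the latter guarantees that the arbitrary common additive component of $p$ contributes nothing to $p^{T}e$, keeping the supply rate consistent. A secondary subtlety is the well-posedness of $L$ when $x^{*}$ sits on the boundary of $X$, where I would confine attention to the invariant face carrying the support of $x^{*}$ (using the convention $0\ln 0=0$). Finally, it is worth noting that the reversed ordering $\sum_{i} x_i\ln(x_i/x_i^{*})$ does \emph{not} work: its derivative is not linear in $p$ and fails to reproduce the passivity supply rate, so the chosen ordering in $L$ is the essential modeling choice driving the lossless identity.
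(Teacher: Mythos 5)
Your proposal is correct and follows essentially the same route as the paper: the storage function $L(x)=\sum_i x_i^{*}\ln(x_i^{*}/x_i)$ is exactly the paper's $V(e_x)=-\sum_i x_i^{*}\ln\frac{e_{x_i}+x_i^{*}}{x_i^{*}}$, nonnegativity is established the same way via Jensen/Gibbs, and the differentiation along \eqref{eq:RD} yields the identical lossless identity $\dot{L}=p^{T}e$. Your additional remarks on interiority of $x^{*}$ and the tangent-space constraint are sensible clarifications but do not change the argument.
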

\begin{proof}
Using  $e_{x_i}=x_i-x^*_i$, the replicator dynamic equation \eqref{eq:RD} can be written as follows:
\begin{align}
\dot{e}_{x_i}=(e_{x_i}+x^*_i)(p_i-\sum_j (e_{x_j}+x^*_j) p_j).
\end{align}
Define the following storage function, 
\begin{align}
V(e_{x})=-\sum_i x^*_i\ln\frac{e_{x_i}+x^*_i}{x^*_i}. \label{eq:storageFun}
\end{align}
Note  that $V(0)=0$, and 
\begin{align*}
V(e_{x})&=-\sum_i x^*_i\ln\frac{e_{x_i}+x^*_i}{x^*_i}\\&\geq -\ln\sum_i x^*_i\frac{e_{x_i}+x^*_i}{x^*_i}\\&=\ln(\sum_i x_i).
\end{align*}
However, $\sum_i x_i=1$. It follows that $V(e_{x})\geq 0$.

Now, the derivative of the storage function is given as follows:
\begin{align*}
\dot{V}(e_{x})&=-\sum_i x^*_i\frac{1}{e_{x_i}+x^*_i}\dot{e}_{x_i}\\
&=-\sum_i x^*_i(p_i-\sum_j (e_{x_j}+x^*_j) p_j)\\
&=-\sum_i x^*_ip_i+\sum_i x^*_i(\sum_j e_{x_j}p_j+\sum_jx^*_j p_j)\\
&=-\sum_i x^*_ip_i+\sum_j e_{x_j}p_j+\sum_jx^*_j p_j\\
&=\sum_j e_{x_j}p_j.
\end{align*}
This  implies that replicator dynamics are passive (lossless) system.
\end{proof}

To illustrate the passivity propriety of the standard replicator dynamics, consider the feedback interconnection between  the standard replicator dynamics and the well known   rock paper scissors game.  The payoff function is given as follows;
\begin{equation}\label{eq:rpsgame}
P=Ax,
\end{equation}
where 
\begin{equation*}
  A = \begin{pmatrix}
  0 & -l &  \omega \\
  \omega  & 0 &  -l \\
 -l &  \omega &  0
  \end{pmatrix}.
\end{equation*}
Here, $l,\omega$ are positive numbers. Since the  rock paper scissors game is a static game, i.e., there is  no dynamics in the mapping from $X$ to $P$. In other words, the rock paper scissors game represents a memoryless system. This game is passive if and only if $X^T P \geq 0$. The follwing shows   under which  conditions the rock paper scissors game satisfies the  passivity property. Consider a vector $X^T= (x_1, x_2, x_3)$. Using \eqref{eq:rpsgame}, it follows that;
\begin{align*}
  X^T\; P&= X^T A X  \\
  & = \begin{pmatrix}
  x_1& x_2 & x_3
  \end{pmatrix} \begin{pmatrix}
  0 & -l &  \omega \\
  \omega  & 0 &  -l \\
 -l &  \omega &  0
  \end{pmatrix} \begin{pmatrix}
  x_1\\ x_2 \\ x_3
  \end{pmatrix}\\
  &=(w-l)(x_1 x_2 + x_2 x_3 +x_1 x_3).
\end{align*}
Since the vector $X$ is a probability  distribution, this implies that the quantity $(x_1 x_2 + x_2 x_3 +x_1 x_3)$ is a non-negative quantity. It follows that the sign of  $ X^T\; P$  only depends on the sign of $(w-l)$.  This implies that  the rock paper scissors game  is strictly passive if $\omega >l$, and non-passive if $\omega<l$. Moreover, in the standard case, i.e.,  $\omega=l$ the game is lossless. 

Fig. \ref{RPS_RD_all} demonstrates the three different cases of the feedback interconnection between the rock paper scissors game  \eqref{eq:rpsgame}  and the standard  replicator dynamics \eqref{eq:RD}. The top figure represents the case where $\omega >l$, i.e., strictly passive game, the middle figure represents the case where $\omega =l$, i.e., lossless passive game, and the bottom  figure represents the case where $\omega <l$, i.e., non-passive game.

\begin{figure}[H]
\centering{\includegraphics[height=5.5 cm]{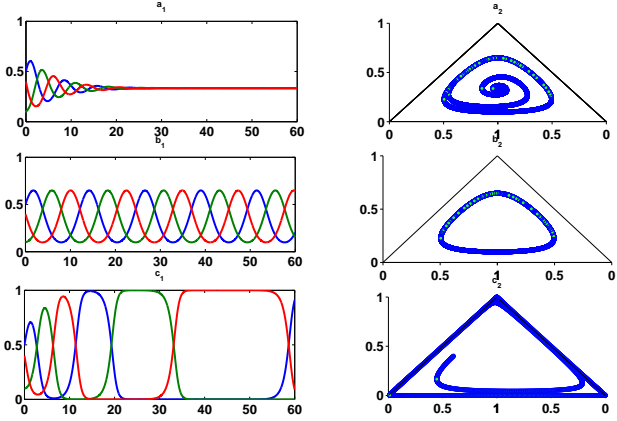}}
\caption{The feedback interconnection between the three different scenarios of rock paper scissors game  \eqref{eq:rpsgame}  and the standard  replicator dynamics \eqref{eq:RD} projected into the simplex.  }\label{RPS_RD_all}.
\end{figure}

\subsection{Distributed  Replicator Dynamics}\label{sec:RD}

 Generally, the stranded Replicator dynamics   describes
the  dynamics of  the deterministic limit
of an infinitely large, well-mixed population, where the  spatial effects are not considered. Here, 
‘Well-mixed’ indicate  that there is no particular structure imposed into the game. In other words,  all agents  are equally likely to interact with one  another. This can be represented as a fully connected graph (complete graph) \cite{ohtsuki2006replicator}.

However, in structured populations (non-well mixed populations), the individual agents  set on  the vertices of a graph and the edges
of  determine which agents interact with each other \cite{ohtsuki2006replicator}. This scenario is called  distributed Replicator dynamics in \cite{bravo2015distributed}. Also, it is called local Replicator dynamics in \cite{pantoja2011dispatch}. The distributed (or local) Replicator dynamics is represented as follows:

\begin{align}\label{eq:DRDS}
\dot{x_i}=x_i(p_i\sum_{j\in N_i} x_j-\sum_{j\in N_i} x_j p_j),
\end{align}
meant to  models  the inference of population dynamics that involve  non-well mixed populations.

In the case where the payoff $P =Ax$, the stander replicator dynamics with can be written as follows;
\begin{align}\label{eq:DRD}
\dot{x_i}=x_i\left((Ax)_i-x^T Ax\right),
\end{align}

In \cite{hilbe2011local} evolutionary models for infinite and finite populations: While the population itself is infinite, interactions and reproduction occurs in random groups of size $N$. This leads to a modified payoff matrix in the following form:

\begin{align}\label{eq:ADRD}
A_{new} = A - \frac{A+A^T}{N}
\end{align}
Therefore, the local replicator dynamics is given as follows; 

\begin{align}\label{eq:LRD}
\dot{x_i}=x_i\left((A_{new}x)_i-x^T A_{new}\;x\right),
\end{align}

The following proposition shows that the local replicator dynamics is also lossless passive from the error to the payoff.
\begin{proposition}
The local replicator dynamics defined in \eqref{eq:LRD} from the error $e_{x_i}$ to the payoff $P=Ax$ is lossless passive. 
\end{proposition}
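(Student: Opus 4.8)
The plan is to reproduce the argument of Theorem 1 almost verbatim, since the local replicator dynamics \eqref{eq:LRD} is structurally identical to the standard dynamics \eqref{eq:RD} with the payoff matrix $A$ replaced by $A_{new}$. First I would pass to the error coordinates $e_{x_i}=x_i-x^*_i$, so that $e_{x_i}+x^*_i=x_i$ and $\dot{e}_{x_i}=\dot{x}_i$, and adopt the \emph{same} storage function used in Theorem 1,
\begin{align*}
V(e_{x})=-\sum_i x^*_i\ln\frac{e_{x_i}+x^*_i}{x^*_i}.
\end{align*}

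The non-negativity $V(e_{x})\geq 0$ together with $V(0)=0$ is inherited directly from Theorem 1: it relies only on the concavity of $\ln$ and on $\sum_i x_i=1$, and is independent of the particular dynamics generating $x$. The real work is the computation of $\dot{V}$. Differentiating and substituting \eqref{eq:LRD} gives $\dot{e}_{x_i}/(e_{x_i}+x^*_i)=(A_{new}x)_i-x^T A_{new}x$, so that
\begin{align*}
\dot{V}(e_{x})=-\sum_i x^*_i\big((A_{new}x)_i-x^T A_{new}x\big).
\end{align*}
Here the scalar $x^T A_{new}x$ factors out of the sum, and using $\sum_i x^*_i=1$ one obtains $\dot{V}(e_{x})=x^T A_{new}x-(x^*)^T A_{new}x=e^T A_{new}x$. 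This is exactly a supply rate of the form (output)$\times$(payoff) with equality, which is the lossless condition.

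The one point that needs care---and the main obstacle---is reconciling the supply rate $e^T A_{new}x$ with the payoff $P=Ax$ named in the statement. Writing $A_{new}=A-\tfrac{1}{N}(A+A^T)$ gives $\dot{V}(e_{x})=e^T Ax-\tfrac{1}{N}e^T(A+A^T)x$, so the bookkeeping identifies the genuine lossless pairing as the one between $e$ and the \emph{effective} payoff $A_{new}x$. The extra term $-\tfrac{1}{N}e^T(A+A^T)x$ vanishes identically precisely when $A$ is antisymmetric, i.e., for zero-sum games, in which case $A_{new}=A$ and the claim reduces literally to Theorem 1; the rock paper scissors matrix with $\omega=l$ is exactly such a case. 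Thus I would either state the conclusion with payoff $A_{new}x$, or restrict to the (anti)symmetric setting where $A_{new}x=Ax$; confirming which of these readings is intended is the crux of finishing the argument rigorously.
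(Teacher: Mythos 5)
Your computation is sound as far as it goes and correctly mirrors Theorem 1, but it stops short of the stated claim, and the ingredient you are missing is exactly the one the paper uses: the storage function is \emph{not} the same as in Theorem 1 but is rescaled by the constant $\frac{N}{N-2}$, i.e. $V(e_{x})=-\left(\frac{N}{N-2}\right)\sum_i x^*_i\ln\frac{e_{x_i}+x^*_i}{x^*_i}$. With this prefactor the same calculation you performed gives $\dot{V}=\frac{N}{N-2}\,e_{x}^{T}A_{new}x$; the paper then expands $A_{new}=A-\frac{1}{N}(A+A^T)$ and collapses $e_{x}^{T}(A+A^T)x$ to $2\,e_{x}^{T}Ax$, so that $\dot{V}=\frac{N}{N-2}\left(1-\frac{2}{N}\right)e_{x}^{T}Ax=e_{x}^{T}Ax=e_{x}^{T}P$ exactly. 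So the reconciliation with the payoff $P=Ax$ that you identify as the crux is achieved neither by restating the supply rate with $A_{new}x$ nor by restricting to antisymmetric $A$, but by absorbing the residual factor $1-\frac{2}{N}$ into the storage function. Relative to the proposition as stated, your proposal therefore has a genuine gap: you never exhibit a storage function certifying losslessness with respect to the supply rate $e_{x}^{T}P$ with $P=Ax$.

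That said, your caution about the cross term is not misplaced. The paper's simplification $e_{x}^{T}A^{T}x=e_{x}^{T}Ax$ is equivalent to $x^{T}(A-A^{T})x^{*}=0$, which holds for all $x$ in the simplex only when $(A-A^{T})x^{*}=0$ (e.g., symmetric $A$, or the standard rock--paper--scissors equilibrium where $Ax^{*}=A^{T}x^{*}=0$); the paper's proof uses this identity without comment. So the honest comparison is: the paper supplies the scaling device you are missing, while you correctly sense (but do not resolve) the hidden assumption behind the step that the scaling is designed to cancel.
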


\begin{proof}
Using  $e_{x_i}=x_i-x^*_i$, the replicator dynamic equation \eqref{eq:LRD} can be written as follows:
\begin{align}
\dot{e}_{x_i}=&(e_{x_i}+x^*_i)[(A_{new}(e_{x}+x^*))_i\\
&-(e_{x}+x^*)^T \;A_{new}\; (e_{x}+x^*)].
\end{align}
Define the following storage function, 
\begin{align}
V(e_{x})=-\left(\frac{N}{N-2}\right) \sum_i x^*_i\ln\frac{e_{x_i}+x^*_i}{x^*_i}.
\end{align}
Note  that $V(0)=0$, and 
\begin{align*}
V(e_{x})&=-\left(\frac{N}{N-2}\right)\sum_i x^*_i\ln\frac{e_{x_i}+x^*_i}{x^*_i}\\&\geq -\left(\frac{N}{N-2}\right)\ln\sum_i x^*_i\frac{e_{x_i}+x^*_i}{x^*_i}\\&=\left(\frac{N}{N-2}\right) \ln(\sum_i x_i),
\end{align*}
However, $\sum_i x_i=1$. It follows that $V(e_{x})\geq 0$.

Now, the derivative of the storage function is given as follows:
\begin{align*}
&\dot{V}(e_{x})\\&=-\left(\frac{N}{N-2}\right)\sum_i x^*_i\frac{1}{e_{x_i}+x^*_i}\dot{e}_{x_i}\\
&=-\left(\frac{N}{N-2}\right)\sum_i x^*_i[(A_{new}(e_{x}+x^*))_i\\&\;\;\;\;\;\;\;\;\;\;\;\;\;-(e_{x}+x^*)^T \;A_{new}\; (e_{x}+x^*)]\\
&=-\left(\frac{N}{N-2}\right) [x^{*T}A_{new}(e_{x}+x^*)\\&\;\;\;\;\;\;\;\;\;\;\;\;\;-(e_{x}+x^*)^T \;A_{new}\; (e_{x}+x^*)]\\
&=-\left(\frac{N}{N-2}\right) [x^{*T}A_{new}e_{x}+x^{*T}A_{new}x^*-e_{x}^{T}A_{new}e_{x}\\&\;\;\;\;\;\;\;\;-e_{x}^T \;A_{new}\; x^*- x^{*T} \;A_{new}\; e_{x} -  x^{*T}A_{new}x^*         ]\\
&=-\left(\frac{N}{N-2}\right) [-e_{x}^{T}A_{new}e_{x}-e_{x}^T \;A_{new}\; x^*]\\
&=\left(\frac{N}{N-2}\right) e_{x}^{T}A_{new}(e_{x}+ x^*)\\
&=\left(\frac{N}{N-2}\right) e_{x}^{T}\left(A - \frac{A+A^T}{N}\right)x\\
&=\left(\frac{N}{N-2}\right) \left(e_{x}^{T}Ax - \frac{1}{N}(e_{x}^{T}Ax+e_{x}^{T}A^Tx)\right)\\
&=\left(\frac{N}{N-2}\right) \left(e_{x}^{T}Ax - \frac{2}{N}e_{x}^{T}Ax\right)\\
&=\left(\frac{N}{N-2}\right) \left(1- \frac{2}{N}\right)e_{x}^{T}Ax\\
&=e_{x}^{T}Ax\\
&=e_{x}^{T}P
\end{align*}
This  implies that replicator dynamics are passive (lossless) system.
\end{proof}

\subsection{Second order replicator  dynamics }

Second order dynamics can be introduced into the evolutionary  dynamics through several forms \cite{laraki2013higher,Fox2013}. In this section, we follow the approach  in \cite{Fox2013}, which use an auxiliary state to introduce the Second order dynamics to derive two versions of the Second order replicator dynamics.

\subsubsection{Integral action Second order replicator dynamics}

One form of the second order replicator dynamics can be obtained by introducing an auxiliary state $\widehat{p}$ in the payoff function. This results in the following dynamics:
\begin{subequations}\label{eq:RD2order}
\begin{align}
\dot{x_i}&=x_i(\widehat{p}_i-\sum_j x_j \widehat{p}_j)\label{eq:RD2ordera}\\
\dot{\widehat{p}_i}&=p_i.\label{eq:RD2orderb}
\end{align}
\end{subequations}
The  equilibrium conditions are $ p_i^*=0$, $x_i^*=\sigma_{\text{max}}(\widehat{p}_i^*)$, and $\widehat{p}_i^*=1$ .

The Equation \eqref{eq:RD2orderb} can be written as follows; 

\begin{align}
\dot{\widehat{p}_i}&=p_i\\ 
\widehat{p}_i &= \int_0^t p_i(\tau) d \tau.
\end{align}
It has been shown in \cite{mabrok2016a} that the second order replicator  dynamics is not passive. In particular, it has been shown that the  linearization of the replicator dynamics \eqref{eq:RD2order} is reduced to a double integrator, which is not passive.

The following proposition proves that the second order replicator defined in \eqref{eq:RD2order} is a negative imaginary system according to the definition given in \eqref{def:nonNI2}. 

\begin{proposition}\label{HoRDNI}
The second order  replicator dynamics defined in \eqref{eq:RD2order} from the error $e_{x_i}$ to the payoff $P$ is a negative imaginary system. 
\end{proposition}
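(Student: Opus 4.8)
The plan is to mirror the proof of the first-order (lossless passivity) theorem, adapting it to the negative-imaginary supply rate of Definition \ref{def:nonNI2}. I would identify the input as the payoff, $u = p$, and the output as the error, $y = e_x$, exactly as in the first-order case; the only structural difference is that the instantaneous payoff $p_i$ driving $\dot{x}_i$ in \eqref{eq:RD} is now replaced by the auxiliary (integrated) payoff $\widehat{p}_i$ in \eqref{eq:RD2ordera}. I therefore expect the same storage function \eqref{eq:storageFun}, $V(e_x) = -\sum_i x_i^* \ln\big((e_{x_i}+x_i^*)/x_i^*\big)$, to work here, with the computation of $\dot{V}$ reproducing the first-order result but with $\widehat{p}$ in place of $p$.

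First I would rewrite \eqref{eq:RD2ordera} in error coordinates using $x_i = e_{x_i} + x_i^*$, and record that $V$ is a valid storage function: $V(0)=0$, and $V \ge 0$ follows verbatim from the first-order argument, i.e.\ by concavity of $\ln$ (Jensen's inequality) together with $\sum_i x_i = 1$. Next I would differentiate $V$ along the trajectories. The factor $x_i^*/(e_{x_i}+x_i^*) = x_i^*/x_i$ cancels against the leading $x_i$ in $\dot{x}_i$, leaving $\dot{V} = -\sum_i x_i^*\big(\widehat{p}_i - \sum_j x_j \widehat{p}_j\big)$. Using $\sum_i x_i^* = 1$ to collapse the double sum and then splitting $x_j = e_{x_j} + x_j^*$, the $\sum_j x_j^* \widehat{p}_j$ contributions telescope exactly as in the first-order proof, yielding $\dot{V} = \sum_j e_{x_j}\widehat{p}_j = e_x^T \widehat{p}$.

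The negative-imaginary step is then immediate. From \eqref{eq:RD2orderb} the auxiliary state is the running integral of the payoff, $\widehat{p}(t) = \int_0^t p(\tau)\,d\tau$, so $\dot{V} = e_x^T \int_0^t p(\tau)\,d\tau = y^T \int_0^t u(\tau)\,d\tau$, which is precisely the supply rate \eqref{lyapd3}, holding here with equality. By Definition \ref{def:nonNI2} the second-order replicator dynamics is therefore negative imaginary (indeed ``lossless'' in the NI sense), which completes the argument.

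The step I expect to need the most care is the treatment of the integrator's constant of integration and the equilibrium value $\widehat{p}_i^* = 1$. Writing $\widehat{p}(t) = \widehat{p}(0) + \int_0^t u\,d\tau$ introduces an offset term $e_x^T \widehat{p}(0)$, and to recover the exact equality $\dot{V} = y^T\int_0^t u\,d\tau$ I would invoke the fact that the error lies in the tangent space, $e_x \in TX$ with $\sum_i e_{x_i} = 0$; hence $e_x^T \widehat{p}(0)$ vanishes whenever $\widehat{p}(0)$ is a multiple of the all-ones vector (in particular for $\widehat{p}(0)=0$, or when $\widehat{p}$ is measured relative to the equilibrium $\widehat{p}^* = \mathbf{1}$). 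This same normalization identity $\sum_i x_i^* = 1$ is what drives the cancellation in the $\dot{V}$ computation, so the simplex structure enters in essentially one place.
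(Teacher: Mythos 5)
Your proposal is correct and follows essentially the same route as the paper's proof: the same storage function \eqref{eq:storageFun}, the same cancellation yielding $\dot{V}=\sum_j e_{x_j}\widehat{p}_j$, and the same identification $\widehat{p}_j=\int_0^t p_j(\tau)\,d\tau$ to match the NI supply rate \eqref{lyapd3}. Your additional remark handling the integration constant via $\sum_i e_{x_i}=0$ is a detail the paper leaves implicit, but it does not change the argument.
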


\begin{proof}
Using  $e_{x_i}=x_i-x^*_i$, the replicator dynamic equation \eqref{eq:RD2order} can be written as follows:
\begin{align}
\dot{e}_{x_i}&=(e_{x_i}+x^*_i)(\widehat{p_i}-\sum_j (e_{x_j}+x^*_j) \widehat{p_j})\\
 \widehat{p}_i &= \int_0^t p_i(\tau) d \tau.
\end{align}
Using the  storage function \eqref{eq:storageFun}, it follows that;

\begin{align*}
\dot{V}(e_{x})&=-\sum_i x^*_i\frac{1}{e_{x_i}+x^*_i}\dot{e}_{x_i}\\
&=-\sum_i x^*_i(\widehat{p_i}-\sum_j (e_{x_j}+x^*_j) \widehat{p_j})\\
&=-\sum_i x^*_i\widehat{p_i}+\sum_i x^*_i(\sum_j e_{x_j}\widehat{p_j}+\sum_jx^*_j \widehat{p_j})\\
&=-\sum_i x^*_i\widehat{p_i}+\sum_j e_{x_j}\widehat{p_j}+\sum_jx^*_j\widehat{p_j}\\
&=\sum_j e_{x_j}\widehat{p_j}\\
&=\sum_j e_{x_j} \int_0^t p_j(\tau) d \tau.
\end{align*}
This  implies that second order replicator dynamic is negative imaginary.
\end{proof}

It is straightforward to show  that the linearization of the replicator dynamics around an equilibrium point, which is given as
\begin{align*}
\delta \dot{x}&= A \delta x+\beta\delta \widehat{p} \\
\delta \dot{\widehat{p} }&= \delta p,
\end{align*}
satisfies the negative imaginary property. 
Here, $A=\begin{bmatrix}
-x_1^* &-x_1^*&\cdots& -x_1^*\\
-x_2^* &-x_2^*&\cdots& -x_2^*\\
\vdots&\vdots&\vdots&\vdots\\
-x_n^* &-x_n^*& \cdots& -x_n^*
\end{bmatrix}$, $\beta=I-x^*x^{*T}$ and $x^*$ is any point in the simplex. The reduced system can be obtained using  the transformation $\delta x=\delta x^*+N \delta w$,  $\delta p=\delta p^*+N\delta q$ and $\delta \widehat{p}=N\xi$ as follows:
\begin{align*}
\delta \dot{w}&= N^TA N\delta w+N^T\beta N \xi\\
 \dot{\xi }&= \delta q.
\end{align*}
Now, consider the case where $n=3$ and $x_i^*=\frac{1}{3}$,
\begin{subequations}\label{eq:RDLR3}
\begin{align} 
\begin{bmatrix}\delta \dot{w}\\
 \dot{\xi }
\end{bmatrix} &= \begin{bmatrix} 0&0&1&0\\
0&0&0&1\\
0&0&0&0\\
0&0&0&0
\end{bmatrix} \begin{bmatrix}\delta w\\
\xi
\end{bmatrix} +
\begin{bmatrix}0 &0\\
0&0\\1&0\\0&1 
\end{bmatrix} \delta q \\
\delta y&= \begin{bmatrix}1&0&0&0\\0&1&0&0
\end{bmatrix} \begin{bmatrix}\delta w\\
\xi
\end{bmatrix}.
\end{align}
\end{subequations}
The system \eqref{eq:RDLR3}  is a double integrator, which is a negative imaginary system.

 Proposition \ref{HoRDNI} shows that the second order  replicator dynamics defined in \eqref{eq:RD2order} satisfies the negative imaginary property. This implies that the feedback interconnection between the second order  replicator dynamics defined in \eqref{eq:RD2order} and any strictly negative imaginary game should lead to a stable behaviour \cite{Mabrok2012}.

For instance, consider the following second order negative imaginary game; 
\begin{align}
 \dot{ z}&=Az+B\; u \notag\\
   P&= C  z +D\; u, \label{eq:ynigame}
\end{align}
where, 
\begin{align*}
A&=\begin{bmatrix}
-0.9&  0\\
    0  &  -1.2
\end{bmatrix},B = \begin{bmatrix}
1&0\\0&1
\end{bmatrix},\\C&= \begin{bmatrix}
 1 &  0\\0&1
\end{bmatrix}  \text{ and } D =  \begin{bmatrix}
 -3 &  0\\0&-3
\end{bmatrix}.
\end{align*}

The feedback interconnection between the second order  replicator dynamics   \eqref{eq:RD2order} and the second order strictly negative imaginary game \eqref{eq:ynigame} is demonstrated in Fig. ref{fig:RD-g}  and Fig. \ref{RD-simplex-j}.

Figs. \ref{fig:RD-g}  and \ref{RD-simplex-j} shows  the evolution of the states of the  positive feedback interconnection between the  strictly negative imaginary game \eqref{eq:ynigame} and  second  order  replicator dynamic \eqref{eq:RD2order}.

\begin{figure}[H]
\centering{\includegraphics[height=4.7 cm]{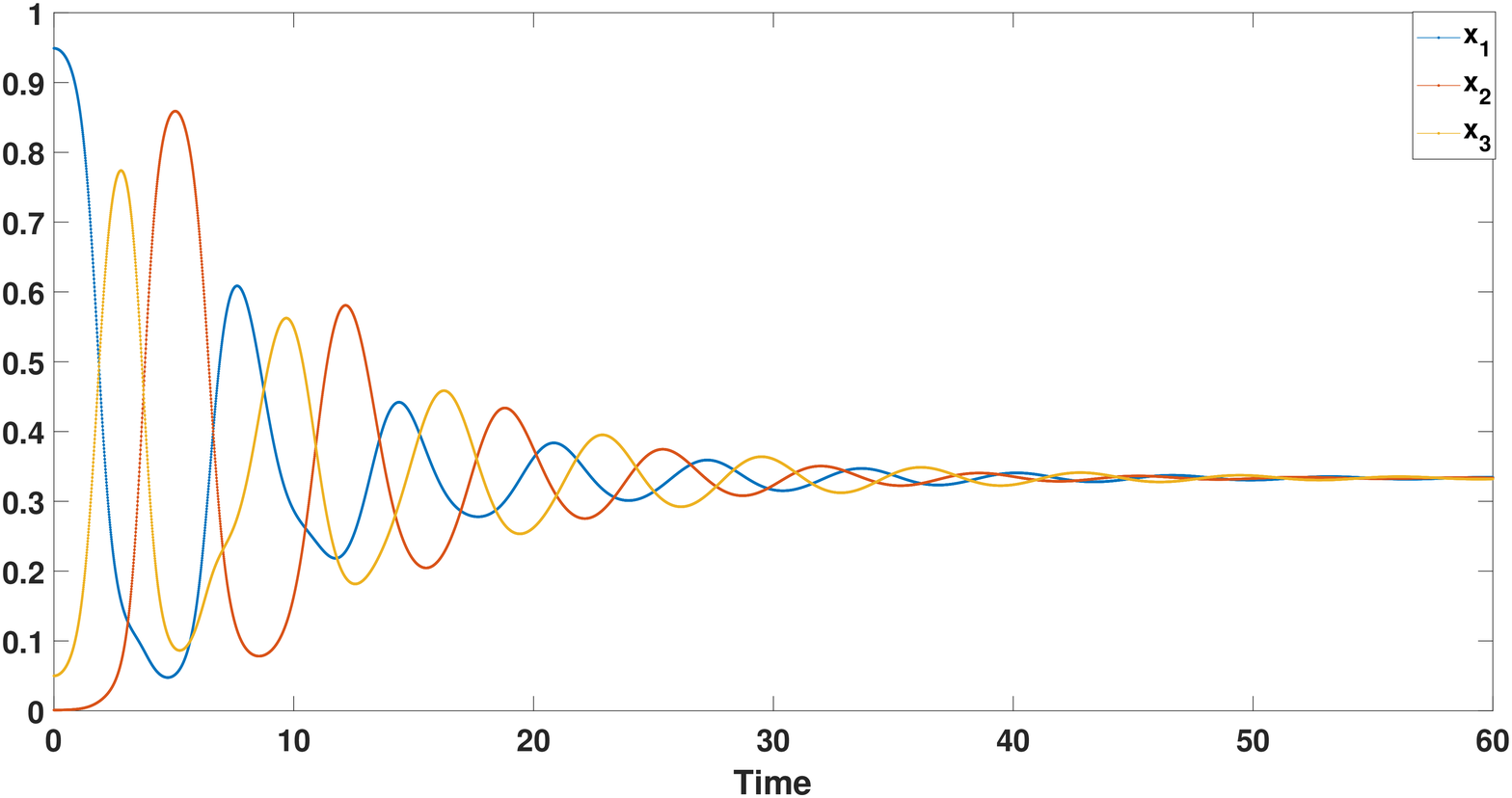}}
\caption{The evolution of the states of the strictly negative imaginary game \eqref{eq:ynigame} in feedback loop with the  second order replicator dynamics \eqref{eq:RD2order}.  }\label{fig:RD-g}.
\end{figure}

\begin{figure}[H]
\centering{\includegraphics[height=8 cm]{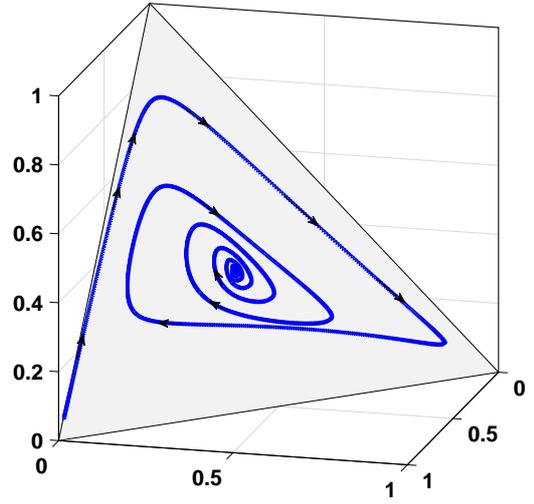}}
\caption{The evolution of the states  of the  strictly negative imaginary game \eqref{eq:ynigame}  in feedback loop with the  second order replicator dynamics \eqref{eq:RD2order} projected into the simplex.  }\label{RD-simplex-j}.
\end{figure}
\subsubsection{Lead-lag second order replicator dynamics}
A more general second order replicator dynamics can be introduced using derivative action combined with low pass filter as a prepossessing to the payoffs before  the replicator dynamics as shown in Fig. \ref{fig:Ption:RD}. The Lead-lag second order replicator dynamics can be defined using an auxiliary state $\widehat{p}$. The dynamics is given as follows;  

\begin{subequations}\label{eq:RD2ordernew}
\begin{align}
\dot{x_i}&=x_i(y_i-\sum_j x_j y_j)\label{eq:RD2orderanew}\\
\dot{\widehat{p}_i}&=\frac{-1}{\beta}\widehat{p} + p_i\label{eq:RD2orderbnew}\\
y_i &= (\frac{1}{\beta}-\frac{\alpha}{\beta^2})  \widehat{p} + \frac{\alpha}{\beta} p_i.
\end{align}
\end{subequations}
Here, $\alpha$ and $\beta$ are positive constants.  

\begin{figure}[H]
\centering{\includegraphics[height=4 cm]{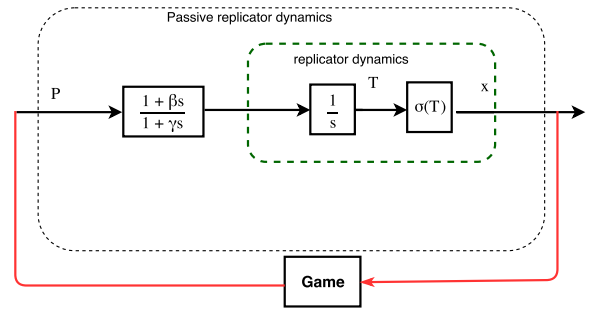}}
\caption{Passive replicator dynamics.  }\label{fig:Ption:RD}.
\end{figure} 

The passivity propriety of  second order replicator dynamics given in \eqref{eq:RD2ordernew} depends on the ratio  between the constants $\alpha$ and $\beta$. The following preposition discuss the passivity property of the replicator dynamics given in \eqref{eq:RD2ordernew}. 

\begin{proposition}\label{Pro:LedSORD}
The second order replicator dynamics given in \eqref{eq:RD2ordernew} is negative imaginary for all positive constant  $\alpha$ and $\beta$ and  passive if and only if $\frac{\alpha}{\beta}>1$. 
\end{proposition}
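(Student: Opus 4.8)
The plan is to reduce the statement to frequency-domain tests on the linearized system, mirroring the route taken for Proposition \ref{HoRDNI}. First I would identify the transfer function of the payoff-processing block: eliminating the auxiliary state $\widehat p$ from \eqref{eq:RD2orderbnew} together with the output equation shows that $y$ is produced from $p$ through the scalar lead--lag filter
\begin{equation*}
H(s)=\frac{\alpha s+1}{\beta s+1},
\end{equation*}
acting componentwise, so that \eqref{eq:RD2ordernew} is the cascade of $H$ with the \emph{standard} replicator dynamics driven by $y$ rather than $p$. Reusing the storage function \eqref{eq:storageFun} verbatim and repeating the computation in the proof of Theorem 1 with $y$ in place of $p$ yields $\dot V(e_x)=e_x^{T}y$, which ties the system to its lossless inner replicator block. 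Linearizing about an interior equilibrium exactly as in the paragraph following Proposition \ref{HoRDNI}, where the first-order replicator part reduces under $\delta x=N\delta w$ to an integrator $1/s$ on each reduced channel, composing with the filter gives the reduced (diagonal) transfer function
\begin{equation*}
G(s)=\frac{1}{s}\,H(s)=\frac{\alpha s+1}{s(\beta s+1)},
\end{equation*}
from $p$ to $e_x$, and both claims reduce to scalar tests on $G$.

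For the negative-imaginary claim I would verify Definition \ref{Def:NI}. The only finite pole lies at $s=-1/\beta\in\mathbb{C}_{-}$, accompanied by a simple pole at the origin, so conditions (1), (3) and (4) are immediate: the origin residue $\lim_{s\to 0}sG(s)=1$ is handled by the simple-pole clause and $s^{2}G(s)\to 0$. The substantive condition is (2), and a direct computation gives
\begin{equation*}
\operatorname{Im}G(j\omega)=\frac{-(1+\alpha\beta\omega^{2})}{\omega(1+\beta^{2}\omega^{2})}<0\qquad(\omega>0),
\end{equation*}
whence $j\bigl(G(j\omega)-G(j\omega)^{\ast}\bigr)=-2\operatorname{Im}G(j\omega)>0$ for \emph{every} $\omega>0$ and \emph{every} $\alpha,\beta>0$, establishing the NI property unconditionally.

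For passivity I would use the positive-real characterization, which for this per-channel single-input single-output transfer function is equivalent to passivity. The poles already meet the PR pole/residue requirements, so the decisive quantity is
\begin{equation*}
\operatorname{Re}G(j\omega)=\frac{\alpha-\beta}{1+\beta^{2}\omega^{2}},
\end{equation*}
which is nonnegative for all $\omega$ exactly when $\alpha\ge\beta$ and is strictly positive at finite frequencies precisely when $\alpha/\beta>1$; the boundary $\alpha=\beta$ collapses $H$ to the identity and recovers the lossless first-order dynamics of Theorem 1. The ``only if'' direction is the contrapositive: for $\alpha/\beta<1$ the real part is negative on all of $(0,\infty)$, so $G$ is not positive real and the map cannot be passive.

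The main obstacle I anticipate is not either scalar test but the two bridging arguments around them. First, I must justify that the multivariable system inherits the scalar conclusions through the reduction $\delta x=N\delta w$, so that the reduced transfer matrix is $G(s)I$ and the sign conditions pass entry by entry. Second, for the necessity half of the passivity claim I must legitimize the passage from the nonlinear system to its linearization, arguing that non-positive-realness of the linearization near the equilibrium obstructs passivity of the nonlinear map and not merely of its linear approximation. The sufficiency direction could instead be made fully nonlinear by augmenting \eqref{eq:storageFun} with a quadratic term in the filter state $\widehat p$ to absorb the gap between $e_x^{T}y$ and the target supply rate $e_x^{T}p$; verifying that this augmented function remains nonnegative and returns the passivity inequality exactly when $\alpha/\beta>1$ is where the genuine effort would concentrate.
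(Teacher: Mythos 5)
Your proposal takes essentially the same route as the paper: both reduce the dynamics to the scalar transfer function $G(s)=\frac{1+\alpha s}{s(1+\beta s)}$ (an integrator in cascade with the lead--lag filter) and read off both claims from the signs of $\operatorname{Re}G(j\omega)=\frac{\alpha-\beta}{1+\beta^{2}\omega^{2}}$ and $\operatorname{Im}G(j\omega)=\frac{-(1+\alpha\beta\omega^{2})}{\omega(1+\beta^{2}\omega^{2})}$, and your computations agree exactly with the paper's equations \eqref{eq:RD_re} and \eqref{eq:RD_im}. The additional steps you flag --- checking the pole/residue clauses of Definition \ref{Def:NI} and bridging from the scalar linearization back to the nonlinear multivariable map --- are refinements the paper's proof simply omits.
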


\begin{proof}
The dynamical part of the second order replicator dynamic  given in \eqref{eq:RD2ordernew} can be expressed as a serial connection between an integrator, $\frac{1}{s}$, and the  following transfer function 
\begin{align*}
  \frac{1+\alpha \; s}{1+\beta \; s}. 
\end{align*}
In other words, the dynamical part of second order replicator dynamic  given in \eqref{eq:RD2ordernew} in frequency domain  is given as follows; 
\begin{align}\label{eq:RD_tf}
  G_{RD}(j\omega)=\frac{1+\alpha \; j\omega}{j\omega\;(1+\beta \; j\omega) }. 
\end{align}
The real and imaginary part of the transfer function   \eqref{eq:RD_tf} is given as follows; 

\begin{align}\label{eq:RD_re}
  \Re(G_{RD})=\frac{\beta (\alpha/\beta - 1)}{\beta^2 \omega^2 +1}. 
\end{align}

\begin{align}\label{eq:RD_im}
  \Im(G_{RD})=\frac{-(\alpha \beta \omega^2 + 1)}{\beta^2 \omega^3 +\omega }. 
\end{align}
It is clear from \eqref{eq:RD_re} that the real part of $G_{RD}(j\omega)$    is positive if and only if $\frac{\alpha}{\beta}>1$. Also, \eqref{eq:RD_im} shows that the imaginary part is always negative for all positive $\alpha$ and $\beta$. 

\end{proof}

Fig. \ref{RPS_RD_led_lag} demonstrates the  feedback interconnection between the standard rock paper scissors game \eqref{eq:rpsgame} with  $w=l$, i.e., lossless game  and the second order   replicator dynamics given  \eqref{eq:RD2ordernew}.
The top figure in Fig. \ref{RPS_RD_led_lag} represents the non-passive replicator dynamics where $\frac{\alpha}{\beta}<1$, where the bottom figure represents the passive replicator dynamics  with $\frac{\alpha}{\beta}>1$ .

\begin{figure}[H]
\centering{\includegraphics[height=5.3 cm]{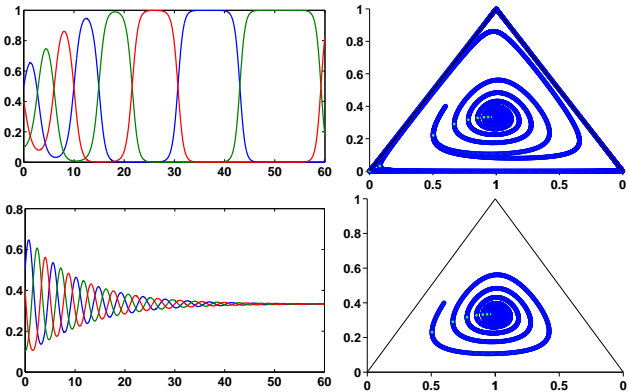}}
\caption{The evolution of the states of the strictly negative imaginary game \eqref{eq:ynigame} in feedback loop with the  second order replicator dynamics \eqref{eq:RD2order}.  }\label{RPS_RD_led_lag}.
\end{figure}

\section{Passivation of non-passive dynamics }
This section  introduces  the notion of \textit{passivation} thorough a feedback or forwarded of non-passive dynamics and static gain. The term passivation in passivity theory means the process  of forcing a non-passive or lossless  dynamical system to behave as a strictly passive system using feedback and/or foreword interconnection. Similarly,

\begin{figure}[H]
\centering{\includegraphics[height=5.0 cm]{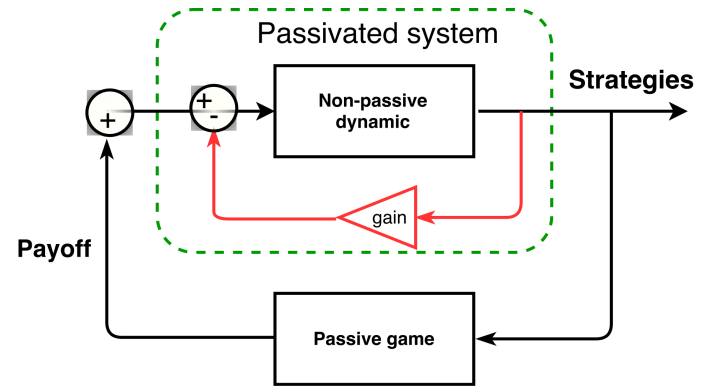}}
\caption{Passiviation of lossless or non-passive dynamics throgh static feedback gain.  Here, the static gain is chosen to passivate the dynamics in the colored green box.  }\label{fig:passivation1}.
\end{figure}

 In the case of replicator dynamics, any positive gain in negative feedback interconnection will change the replicator dynamics for lossless dynamics to strictly passive dynamics in Fig. \ref{fig:RD-p}.

\begin{figure}[H]
\centering{\includegraphics[height=4 cm]{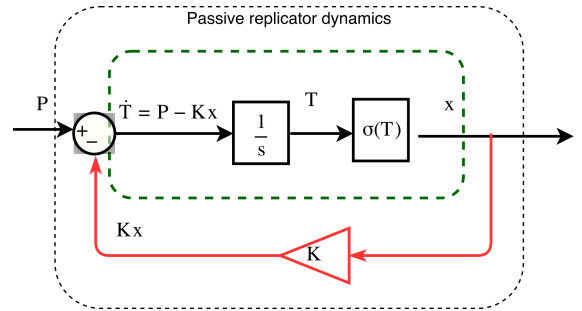}}
\caption{Passive replicator dynamics.  }\label{fig:RD-p}.
\end{figure} 

The passive first order replicator dynamics is then given as follows; 
\begin{align}\label{eq:PRD}
\dot{x_i}&=x_i(\dot{T_i}-\sum_j x_j \dot{T_j}) \notag\\
\dot{T_i}&=P_i-K_i x_i.
\end{align}

Similarly, as in Proposition \ref{Pro:LedSORD}, we can show that the modified replicator dynamics \eqref{eq:PRD} a is strictly passive.

\section{Concluding Remarks}

In this paper, passivity  analysis  for  different versions  of the replicator dynamics was conducted. The standard first order replicator dynamics is shown to be passive lossless. This implies that the replicator dynamics will lead to a stable equilibrium with all strictly passive games. Similarly, the second order replicator dynamics is shown to satisfy the negative imaginary property, which implies that  the second order replicator dynamics leads to a stable equilibrium with all strictly negative imaginary games.

\bibliographystyle{IEEEtran}

\begin{biography}[{\includegraphics[width=2in,height=1.225in,clip,keepaspectratio]{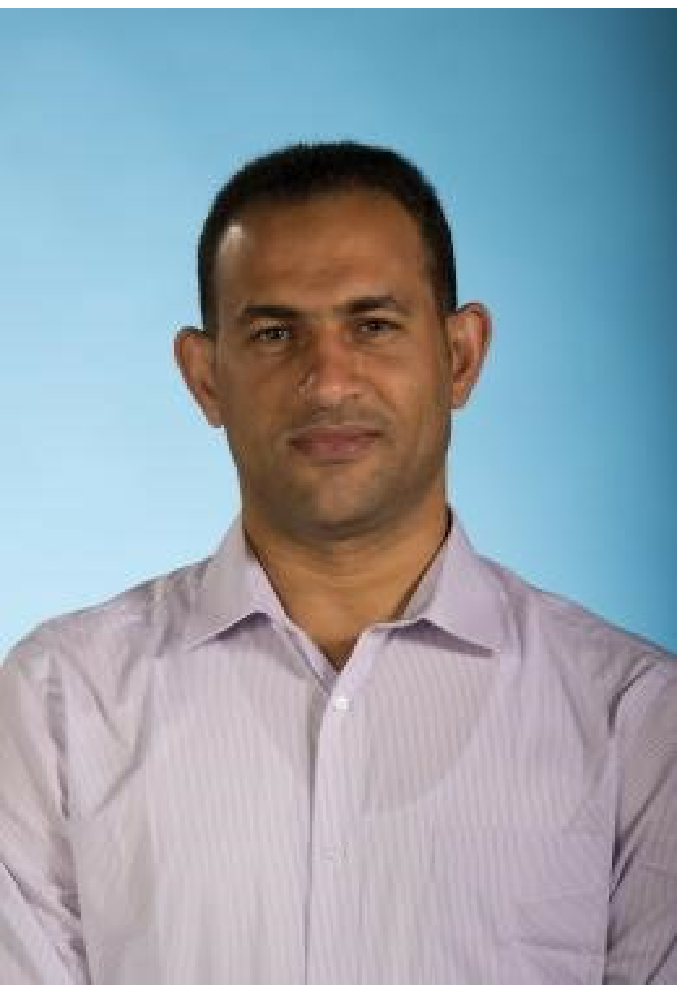}}]{Dr. Mohamed A. Mabrok}
  is an Assistant Professor with the Faculty of Engineering at the Australian College of Kuwait. In 2004 he obtained a   bachelor  degree applied mathematics and computer science and in 2009 he obtained a masters degree in applied mathematics and quantum physics. In 2013, Dr. Mabrok obtained his  Ph.D., in electrical engineering in  the field of robust control theory   from the University of New South Wales, Australia, with Professor Ian Petersen. 
  
  After finishing his Ph.D., Dr. Mabrok  worked as a postdoctoral fellow at the the UNSW at the Australian Defence Force Academy for two years. In 2015, he joined  professor Jeff Shamma' group at KAUST as a postdoctoral fellow for two years. In 2017, he joined the Australian College of Kuwait as an assistant professor. 

Dr. Mabrok's main research interests and experience   are in robust control theory, game theoretic learning, quantum control theory, and Human in the loop control. 
\end{biography}
\end{document}